\newtheorem{thm}{Theorem}
\newtheorem{lem}{Lemma}
\newtheorem{cor}{Corollary}
\renewcommand{\thetheirtheorem}
\newcommand{\Acal}{{\mathcal A}}
\newcommand{\Scal}{{\mathcal S}}
\newcommand{\Znat}{{\mathbb Z}}
\newcommand{\Z}{{\mathbb Z}}
\newcommand{\R}{{\mathbb R}}
\title[Sidon]{A numerical note on upper bounds for $B_2[g]$ sets}
\author{Laurent Habsieger \and Alain Plagne }
\thanks{Both authors are supported by the ANR grant C\ae sar, number ANR 12 - BS01 - 0011.}
\email{habsieger@math.univ-lyon1.fr}
\address{Universit\'e de Lyon, CNRS UMR 5208, Universit\' e Claude Bernard Lyon 1,
Institut Camille Jordan, 43 boulevard du 11 novembre 1918, 69622 Villeurbanne Cedex, France \bigskip}
\email{plagne@math.polytechnique.fr}
\address{Centre de Math\' ematiques Laurent Schwartz, \' Ecole polytechnique, CNRS, Universit\' e Paris-Saclay, 91128 Palaiseau Cedex, France}
\begin{document}

\begin{abstract}
Sidon sets are those sets such that the sums of two of its elements never coincide. 
They go back to the 30s when Sidon asked for the maximal size of a subset of consecutive integers with that property. 
This question is now answered in a satisfactory way. Their natural generalization, called $B_2 [g]$ sets and defined 
by the fact that there are at most $g$ ways (up to reordering the summands) to represent a given integer as a sum of 
two elements of the set, are much more difficult to handle and not as well understood. 

In this article, using a numerical approach, we improve the best upper estimates on the size of a $B_2 [g]$ set in an interval of integers 
in the cases $g=2,3,4$ and $5$. 
\end{abstract}

\maketitle

\section{Introduction}

Let $g$ be a positive integer. A set $\Acal$ of integers is said to be a $B_2 [g]$ set if for any integer $n$, there are at most $g$ ways 
to represent $n$ as a sum $a+b$ with $a,b\in\Acal$ and  $a\le b$. As is usual, we denote by $F(g,N)$ the largest possible size of 
a $B_2 [g]$ set contained in $\{0,1,\dots,N\}$. In the study of $B_2 [g]$ sets, this is the most studied aspect.

These sets have a long history going back to Sidon \cite{Sidon} in the 30s and to the seminal work of Bose, Chowla and Singer 
as for lower bounds, and of Erd\H{o}s and Tur\'an as for upper bounds. See \cite{Singer,Bose,Chowla,ET}. Except in the case $g=1$, for which it is known that
$$
F(1,N) \sim \sqrt{N},
$$
the precise asymptotic behaviour of $F(g,N)$ remains unknown.

However, for any positive $g$, since a Sidon set is in particular a $B_2 [g]$ set, it is known that the quantity $F(g,N)$ grows at least 
like a constant times $\sqrt{N}$. Better lower bounds were obtained in \cite{P,HP, CRT,MOB2}. As for upper bounds, 
the current best result is %the following
\begin{equation}
\label{upperbound1}
F(g,N)\lesssim \sqrt{ \min \big( 3.1694\ g, 1.74217\ (2g-1) \big)\,  N},
\end{equation}
the first argument in the minimum being contained in \cite{M-OB} and the second one in \cite{Yu2}. Notice that the first is better 
than the second as soon as $g \ge 6$. Notice finally that it is not even known whether there is a constant $c_g$ such that 
$F(g,N) \sim c_g \sqrt{N}$.

In this article, we slightly improve on the upper bound \eqref{upperbound1} for $g<6$ by proving the following result.

\begin{thm} 
\label{theo} 
One has
$$
F(g,N)\lesssim \sqrt{1.740463\ (2g-1)\ N}.   				%		 \sqrt{1.740462703719317\dots\ (2g-1)\ N}. 
$$
\end{thm}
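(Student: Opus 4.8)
The plan is to run the Fourier-analytic (Erd\H{o}s--Tur\'an type) argument in the sharpened form due to Yu \cite{Yu2}, which already yields the bound $1.74217\,(2g-1)$ in \eqref{upperbound1}, and then to improve the numerical constant by optimising over a larger class of auxiliary functions.

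\emph{Set-up and the $B_2[g]$ input.} Let $\Acal\subseteq\{0,1,\dots,N\}$ be a $B_2[g]$ set with $A=|\Acal|$, and put $f(x)=\sum_{a\in\Acal}e^{2\pi i a x}$, a trigonometric polynomial of degree $\le N$. Then $f(0)=\int_0^1|f|^2=A$; the $k$-th Fourier coefficient of $|f|^2$ is $R(k):=\#\{(a,b)\in\Acal^2:a-b=k\}\ge 0$, so $|f|^2\ge 0$, all its Fourier coefficients are $\ge 0$, and $R(0)=A$; and, by Parseval applied twice, $\int_0^1|f|^4=\sum_n r(n)^2=\sum_k R(k)^2$ where $r(n):=\#\{(a,b)\in\Acal^2:a+b=n\}$. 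The hypothesis enters only here: since $r(n)\le 2g$ and $\sum_n r(n)=A^2$ one has $\int_0^1|f|^4\le 2gA^2$, and subtracting the term $R(0)^2=A^2$ this reads
$$
\big\|\,|f|^2-A\,\big\|_2^2=\sum_{k\ne 0}R(k)^2\le(2g-1)A^2,
$$
which is the sole source of the factor $2g-1$ (a further, immaterial saving of order $A$ comes from diagonal representations).

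\emph{Reduction to an extremal problem.} Write $|f|^2=A(1+\eta)$: then $\eta$ is a real trigonometric polynomial of degree $\le N$ with $\widehat\eta(0)=0$, with $\widehat\eta(k)=R(k)/A\ge 0$ for $k\ne0$, with $\eta\ge -1$, with $\eta(0)=A-1$, and with $\|\eta\|_2^2\le 2g-1$. It suffices to prove $\eta(0)\le\sqrt{1.740463\,(2g-1)N}\,(1+o(1))$ as $N\to\infty$. The mechanism is duality. For any even real trigonometric polynomial $\Psi$, of arbitrary degree, with $\widehat\Psi(k)\ge 1$ for $0<|k|\le N$, the sign conditions on $\widehat\eta$ give
$$
\eta(0)=\sum_{0<|k|\le N}\widehat\eta(k)\;\le\;\sum_{0<|k|\le N}\widehat\eta(k)\,\widehat\Psi(k)\;=\;\int_0^1\eta\,\Psi ;
$$
writing $\Psi=\Psi_+-\Psi_-$ and using $-\eta\le 1$, $\Psi_-\ge 0$ on the one hand and Cauchy--Schwarz with $\|\eta\|_2^2\le 2g-1$ on the other,
$$
\eta(0)\;\le\;\int_0^1\Psi_-\;+\;\sqrt{2g-1}\;\|\Psi_+\|_2 .
$$
So the whole problem reduces to minimising $\int_0^1\Psi_-+\sqrt{2g-1}\,\|\Psi_+\|_2$ over admissible $\Psi$; after rescaling this becomes a fixed, $N$-independent extremal problem, and the best attainable constant is what takes the place of $1.740463$.

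\emph{The numerical core --- the main obstacle.} The simplest admissible choice $\Psi=D_N$ (the Dirichlet kernel, for which $\widehat\Psi(k)=1$ on $[-N,N]$) already gives a bound of the right shape: $\int_0^1(D_N)_-=O(\log N)=o(\sqrt N)$ by the Lebesgue-constant estimate, while $\|(D_N)_+\|_2^2=(c+o(1))N$ for a constant $c$ somewhat larger than $1.74217$, so $\eta(0)\le\sqrt{c\,(2g-1)N}\,(1+o(1))$. The improvement comes from using the Fourier coefficients of index $|k|>N$ --- which are unconstrained --- to damp $\Psi_+$ near the origin while keeping $\widehat\Psi(k)\ge 1$ on $[-N,N]$; the optimal $\Psi$ is a genuine extremal profile. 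Following Yu, I would fix a finite-dimensional family of admissible $\Psi$ (combinations of dilated Fej\'er-type kernels depending on a few free parameters), pass to the limit $N\to\infty$ so that the minimisation becomes a finite-dimensional one in those parameters, and solve it; the new input is to enlarge the family and carry out the optimisation more accurately, which lowers the constant from $1.74217$ to $1.740463$. The genuine difficulty is not the (by now standard) analysis but making the numerical optimum rigorous: one must exhibit an explicit near-optimal $\Psi$, verify the finitely many linear inequalities $\widehat\Psi(k)\ge 1$, evaluate $\|\Psi_+\|_2$ and $\int_0^1\Psi_-$ with certified error bounds, and keep explicit control of every $o(1)$. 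Feeding the resulting estimate back through $\eta(0)=A-1$ gives $F(g,N)\le\sqrt{1.740463\,(2g-1)N}\,(1+o(1))$, which is the asserted bound.
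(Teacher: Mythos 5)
Your set-up is sound and each individual inequality you write is correct: the bound $\sum_{k\neq0}R(k)^2\le(2g-1)\vert\Acal\vert^2$ is exactly the paper's Lemma \ref{S}, and the chain $A-1\le\int_0^1\Psi_-+\sqrt{2g-1}\,\Vert\Psi_+\Vert_2$, valid for any even real trigonometric polynomial $\Psi$ with $\widehat\Psi(k)\ge1$ for $1\le\vert k\vert\le N$, follows correctly from $d(k)\ge0$, $\vert f\vert^2\ge0$ and Cauchy--Schwarz. But be aware that this is a genuinely different relaxation from the one in the paper: you test the pointwise nonnegativity of the difference representation function against a majorant whose Fourier coefficients dominate $1$ on $[1,N]$, whereas the paper (following Yu) tests the positive definiteness of that function, i.e.\ $\vert\hat f\vert^2\ge0$, against a weight $w_b$ with nonnegative cosine coefficients, and reduces (Theorem \ref{Maj}, Corollary \ref{tau1}) to maximizing $I_1(w_b)^2/I_2(w_b)$ subject to $I_1(w_b)<0$. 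These are two different extremal problems, and nothing in your argument shows that they have the same optimal value.

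The genuine gap is that the theorem \emph{is} the numerical constant, and you never produce the extremal object. You reduce matters to minimizing $\limsup\Vert\Psi_+\Vert_2^2/N$ under $\widehat\Psi(k)\ge1$ and $\Vert\Psi_-\Vert_1=o(\sqrt N)$, verify only the Dirichlet kernel (which gives a constant around $1.87$), and then assert that ``enlarging Yu's family and optimizing more accurately'' yields $1.740463$. That assertion is the entire content of the result: Yu's kernels are built for the positive-definiteness relaxation, so it is not even established that your formulation reproduces Yu's $1.74217$, let alone improves on it; and the constant $1.740463$ in the paper is not a generic output of ``more careful optimisation'' but the value of $2\left(1-I_1(w)^2/I_2(w)\right)$ at an explicit $801$-parameter function $w(t)=\cos\big((y_0+\pi)t\big)+\sum_{j=1}^{400}(c_j/j)\cos\big((y_j+(2j+1)\pi)t\big)$ obtained by a heavy numerical search (coefficients listed in the paper and in \cite{HPnum}). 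To complete your proof you would have to exhibit a concrete $\Psi$ (or sequence $\Psi_N$) with certified $\widehat\Psi(k)\ge1$ on $[1,N]$, $\Vert\Psi_-\Vert_1=o(\sqrt N)$ and $\Vert\Psi_+\Vert_2^2\le(1.740463+o(1))N$, or else prove that the optimum of your dual problem is at most the paper's $2\left(1-\sup I_1^2/I_2\right)$ and then still supply the optimized function; neither step is carried out, so the claimed bound is not proved.
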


For instance, we obtain $F(2,N) \lesssim 2.2851 \sqrt{N}$ instead of Yu's $2.2864 \sqrt{N}$. 
These improvements remain modest but one must remember that this is the case for all the recent ones since the beginning of the 2000s
when it was proved \cite{Green} that $F(2,N) \lesssim 2.2913 \sqrt{N}$. 
Theorem \ref{theo} gives in particular a new best result in the cases $g=2,3,4$ and $5$ and corresponds to pushing Yu's method 
to some kind of extremity since it is not at all clear that the method is even able to prove $F(g,N)\lesssim \sqrt{1.74\ (2g-1)\ N}$ 
(although we conjecture that this is the case).

We examine Yu's method and use an approach similar to the one used in \cite{H} which led to new bounds 
for the dual problem of additive bases. 
In order to prove Theorem \ref{theo}, we first reformulate Yu's method \cite{Yu1, Yu2} by giving a general explicit result (Theorem \ref{Maj}) 
depending on the choice of a fixed auxiliary function. This allows us to apply the method not only to the case of polynomials (of high degree) 
but directly to the case of power series. In this frame, Yu's bound corresponds to an appropriate choice of 
the auxiliary function. Finally, we optimize the use of our general result by computing numerically a best possible 
function of a certain type. This leads to Theorem \ref{theo}.

\section{The method}
\label{sec2}

Let $\Acal$ be a set of integers contained in $\{0,1,\dots,N\}$. We define the function
$$
\hat{f}  (t) =  \sum_{a \in \Acal} \exp ( 2 \pi i at ).
$$
In particular, $\hat{f}  (0) = | \Acal |$.
 
If $d$ denotes the function counting the number of representations of an integer as a difference in $\Acal - \Acal$, 
namely, for $n \in \Znat$,
$$
d(n)= |\{(a,b)\in\Acal^2\ :\ a-b=n\} |,
$$
then we may compute that
\begin{equation}
\label{ffbar}
\vert \hat{f}  (t) \vert^2 =  \sum_{a,a' \in \Acal} \exp ( 2 \pi i (a-a')t ) = \sum_{\vert n\vert \leq N} d(n) \exp ( 2 \pi i n t )= 
 \sum_{\vert n\vert \leq N} d(n) \cos ( 2 \pi  n t ),
\end{equation}
where we use the parity of $d$ which follows from the symmetry of $\Acal - \Acal$ as multiset.

In this article, a function $b$ will be called {\em admissible} if the following holds: its set of definition $\Scal_b \subset \R$ is countable, 
symmetric with respect to zero and contains $0$, $b$ is an even function taking its values in the set of non negative real numbers $\R^+$, namely 
$b: \Scal_b \rightarrow \R^+$ and, finally,
$$
\sum_{\theta \in \Scal_b} b(\theta) < + \infty.
$$
In the sequel, for simplicity, we denote $b( \theta) = b_\theta$. 

An admissible function $b$ being chosen, we define the function $w_b$ as 
$$
w_b (t) = \sum_{\theta\in\Scal_b} b_{\theta} \exp \left( 2i\pi\theta t \right) = \sum_{\theta\in\Scal_b} b_{\theta} \cos \left( 2 \pi\theta t \right),
$$
by parity of $b$. Notice that the admissibility of $b$ implies $w_b$ to be $C^\infty (\R )$ and even.

Suppose that a subset $\Acal$ of $\{0,1,\dots,N\}$ is given, as well as an admissible function $b$, we then define
$$
D_\Acal (b) =\sum_{\vert n\vert \leq N} d(n) w_b \left(\frac{n}{N}\right).
$$
It is easy to compute, interverting the order of summations, that
\begin{eqnarray}
D_\Acal (b) 		& =	&\sum_{\vert n\vert \leq N} d(n)\sum_{\theta\in\Scal_b} b_{\theta} \exp \left( 2i\pi\theta \frac{n}{N} \right) \nonumber \\	%\sum_{\vert n\vert \leq N} d(n) w \left(\frac{n}{N}\right) 
&= &\sum_ {\theta\in\Scal_b}  b_{\theta}\sum_{\vert n\vert \leq N} d(n) \exp \left( \frac{2i\pi \theta n}{N} \right) \nonumber\\
&=&\sum_{\theta\in\Scal_b}  b_{\theta}\left\vert \hat{f}  \left( \frac{\theta}{N} \right) \right\vert^2, \label{D}
\end{eqnarray}
the last equality following from \eqref{ffbar}. This shows in particular that $D_\Acal (b)\geq b_0 |\Acal |^2 \geq 0$.

\section{Two lemmas}

In this section, we state two results which will be useful in our argument. The first one is a lemma of an analytical nature. If $w$ is an even $C^2(\R)$ function, we denote 
$$
\aligned
I_1(w) &= \int_0^1 w(t)\, {\rm d}t, \\
I_2(w) &= \int_0^1 w(t)^2\, {\rm d}t, \\
\Vert w''\Vert &= \max_{t\in [0,1]} \vert w''(t)\vert, \\
A(w) &= \vert w'(1)\vert+\Vert w''\Vert. \\
\endaligned
$$
Such an even $C^2(\R)$ function $w$ being given, we define the function $\tilde w$ as the unique $2$-periodic function coinciding 
with $w$ on $[-1,1]$.

We have the following lemma.

\begin{lem} 
\label{F} 
Let $w$ be an even $C^2(\R)$ function. For $m \in \Z$, let
$$
a_m=\int_{-1}^1 w(t)\exp(-i\pi mt)\, {\rm d} t.
$$ 
Then we have
$$
\tilde w(t)=\frac{a_0}{2}+\sum_{m=1}^{+\infty} a_m\cos(\pi mt).
$$
Moreover, the following upper bound holds		
$$
\vert a_m\vert \le \frac{2A(w)}{\pi^2m^2}\,.
$$

One also has
$$
\aligned
I_1(w) &= \frac{a_0}2, \\
I_2(w) &=  \frac{a_0^2}4 + \frac12 \sum_{m=1}^{+\infty} a_m^2, \\
2( I_2(w) -I_1(w)^2)&=\sum_{m=1}^{+\infty} a_m^2\,.
\endaligned
$$
\end{lem}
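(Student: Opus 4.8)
The plan is to prove each assertion in turn, starting from the Fourier expansion of $\tilde w$ on the interval $[-1,1]$ of length $2$. Since $w$ is even and $C^2$, the periodic extension $\tilde w$ is continuous and piecewise $C^2$, so its Fourier series in the orthonormal system $\{\frac{1}{\sqrt2}, \cos(\pi m t), \sin(\pi m t)\}_{m\ge 1}$ on $[-1,1]$ converges (indeed uniformly, once we have the $1/m^2$ decay). Parity of $w$ kills all the sine coefficients, so $\tilde w(t) = \tfrac{a_0}{2} + \sum_{m\ge1} a_m \cos(\pi m t)$ with $a_m$ as defined; this is the first claim. Here I would be slightly careful to note that $a_m = \int_{-1}^1 w(t)\exp(-i\pi m t)\,{\rm d}t = \int_{-1}^1 w(t)\cos(\pi m t)\,{\rm d}t$ by parity, which is the usual cosine Fourier coefficient on an interval of length $2$.

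For the bound $|a_m| \le 2A(w)/(\pi^2 m^2)$, the key step is integration by parts twice. Starting from $a_m = \int_{-1}^1 w(t)\cos(\pi m t)\,{\rm d}t$ and integrating by parts once (using that $w$ is even, so $w'$ is odd and $w'(-1) = -w'(1)$), the boundary terms combine and one is left with an integral of $w'(t)\sin(\pi mt)$; integrating by parts a second time produces a boundary contribution of size at most $\frac{2|w'(1)|}{\pi^2 m^2}$ together with an integral $\frac{-1}{\pi^2 m^2}\int_{-1}^1 w''(t)\cos(\pi m t)\,{\rm d}t$, whose modulus is at most $\frac{2\|w''\|}{\pi^2 m^2}$. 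Adding the two contributions and recalling $A(w) = |w'(1)| + \|w''\|$ gives exactly the claimed estimate. I would do this computation cleanly, tracking the parity-induced sign conventions, since that is where an off-by-a-factor-of-$2$ error would creep in.

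Finally, for the integral identities: $I_1(w) = \int_0^1 w = \tfrac12\int_{-1}^1 w = \tfrac{a_0}{2}$ by evenness, which is immediate. For $I_2(w)$, I would apply Parseval's identity to $\tilde w$ in the orthonormal basis above: $\int_{-1}^1 \tilde w(t)^2\,{\rm d}t = \tfrac{a_0^2}{2} + \sum_{m\ge1} a_m^2$ (the normalizing constants are $\int_{-1}^1 1\,{\rm d}t = 2$ and $\int_{-1}^1 \cos^2(\pi m t)\,{\rm d}t = 1$). Dividing by $2$ and using evenness of $w^2$ to write $I_2(w) = \tfrac12\int_{-1}^1 w^2$ yields $I_2(w) = \tfrac{a_0^2}{4} + \tfrac12\sum_{m\ge1} a_m^2$. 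The last identity is then just algebra: $2(I_2(w) - I_1(w)^2) = 2\big(\tfrac{a_0^2}{4} + \tfrac12\sum_{m\ge1} a_m^2 - \tfrac{a_0^2}{4}\big) = \sum_{m\ge1} a_m^2$.

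The only genuinely delicate point is justifying that Parseval applies and that the Fourier series represents $\tilde w$ pointwise (not merely in $L^2$): this follows because the bound $|a_m| = O(1/m^2)$ makes $\sum a_m \cos(\pi m t)$ absolutely and uniformly convergent, hence its sum is a continuous $2$-periodic function whose Fourier coefficients agree with those of $\tilde w$; since two continuous functions with the same Fourier coefficients coincide, the expansion holds everywhere. Everything else is routine integration by parts and bookkeeping with the parity of $w$.
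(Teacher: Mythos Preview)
Your proof is correct and follows essentially the same approach as the paper's: two integrations by parts for the bound on $|a_m|$, and the standard Fourier/Parseval identities for $I_1$ and $I_2$. The only cosmetic difference is that the paper carries out the integrations by parts directly on the complex exponential form of $a_m$ (the first boundary term vanishes by evenness of $w$, and the second yields $2(-1)^m w'(1)$ by oddness of $w'$), whereas you first pass to the cosine form---note in particular that in your first integration by parts the boundary term vanishes simply because $\sin(\pm\pi m)=0$, with parity only entering at the second step.
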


\begin{proof} Dirichlet's theorem ensures us that $\tilde w$ coincides with its Fourier expansion. This is the first equality. %avec sa s\'erie de Fourier. La majoration s'ensuit de :

As for the upper bound, we compute, using the parity of $w$,
$$\aligned
a_m &= \left\lbrack \frac{w(t)\exp(-i\pi mt)}{-i\pi m} \right\rbrack_{-1}^1
+\frac{1}{i\pi m}\int_{-1}^1 w'(t)\exp(-i\pi mt)\, {\rm d}t \\
&= \frac{1}{i\pi m}\int_{-1}^1 w'(t)\exp(-i\pi mt)\, {\rm d}t \\
%&= \frac{1}{i\pi m}  \left( \left\lbrack  \frac{w'(t)\exp(-i\pi mt)}{- i\pi m} \right\rbrack_{-1}^1 +\frac{1}{i\pi m}\int_{-1}^1 w''(t)\exp(-i\pi mt)\, {\rm d}t \right) \\
&= \frac{1}{\pi^2 m^2}\left( \left\lbrack  w'(t)\exp(-i\pi mt)\right\rbrack_{-1}^1 - \int_{-1}^1 w''(t)\exp(-i\pi mt)\, {\rm d}t \right) \\
%&=   \frac{1}{\pi^2 m^2}\left( (-1)^{m}  (w'(1)-w'(-1)) - \int_{-1}^1 w''(t)\exp(-i\pi mt)\, {\rm d}t \right)\\
&= \frac{1}{\pi^2 m^2}\left( 2 (-1)^{m} w'(1) - \int_{-1}^1 w''(t)\exp(-i\pi mt)\, {\rm d}t \right).\\
%\frac{2 (-1)^{m-1}  w'(1) }{\pi^2 m^2} +\frac{1}{\pi^2 m^2}\int_{-1}^1 w''(t)\exp(-i\pi mt)\, {\rm d}t\,.\\
\endaligned$$
The upper bound of the lemma follows.

Concerning the two first identities, they are immediately implied by standard calculus and the normal convergence of the Fourier series of $\tilde w$ which allow to interchange 
summation and integration. The third one follows from the previous two.
\end{proof}

The second lemma is of an arithmetical nature. In the course of proving the Theorem, we shall meet the following quantity
$$
S(\Acal)=\frac{1}{2N}\sum_{n=-N}^{N-1}\left(\left\vert\hat f\left(\frac{n}{2N}\right)\right\vert^2-\vert\Acal\vert\right)^2,
$$
where we use the notation of Section \ref{sec2}, in particular $\Acal$ is a set of integers included in $\{0,1,\dots,N\}$. 
The next lemma is an upper bound for $S(\Acal)$.

\begin{lem} \label{S} 
If $\Acal$ is a $B_2[g]$ set included in $\{0,1,\dots,N\}$, then
$$
S(\Acal)\le (2g-1)\vert\Acal\vert^2.
$$
\end{lem}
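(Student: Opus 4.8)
The plan is to expand $|\hat f(n/(2N))|^2$ in terms of the difference-representation function $d$ and then exploit the $B_2[g]$ condition through an $L^2$ estimate. By \eqref{ffbar} applied with the argument $t = n/(2N)$, we have $|\hat f(n/(2N))|^2 = \sum_{|k|\le N} d(k)\cos(\pi k n/N)$, and since $d(0) = |\Acal|$, the quantity inside the square in $S(\Acal)$ is $\sum_{1\le |k| \le N} d(k)\cos(\pi k n/N) = 2\sum_{k=1}^{N} d(k)\cos(\pi k n/N)$. So $S(\Acal)$ is essentially a discrete $L^2$ norm of a cosine polynomial, and the first step is to apply discrete orthogonality of the characters $n \mapsto \exp(\pi i k n/N)$ over $n$ running through a complete residue system modulo $2N$ (which $n = -N, \dots, N-1$ is). This collapses the double sum over pairs $(k,k')$ to the diagonal-type terms $k \equiv \pm k' \pmod{2N}$; since $1 \le k, k' \le N$ this forces $k = k'$ (the congruence $k \equiv -k'$ would need $k+k' = 2N$, i.e. $k=k'=N$, which is consistent, so one must handle the endpoint $k=N$ with a little care). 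The upshot is $S(\Acal) = \sum_{k=1}^{N} d(k)^2$ up to the endpoint bookkeeping, or more safely $S(\Acal) \le 2\sum_{k=1}^{N} d(k)^2$ if one is generous; I will aim for the clean identity $S(\Acal) = \sum_{1\le |k|\le N} d(k)^2$.

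The second step is the arithmetic input: bound $\sum_{k\ge 1} d(k)^2$ using that $\Acal$ is a $B_2[g]$ set. The key observation is that $d(k)^2$ counts quadruples $(a,b,a',b')$ with $a-b = a'-b' = k$, equivalently $a + b' = a' + b$. For fixed value $m = a+b' = a'+b$, the $B_2[g]$ property says $m$ has at most $g$ representations as an unordered sum of two elements of $\Acal$; counting ordered pairs and being careful about the diagonal, the number of such quadruples with $a+b'$ fixed is at most something like $(2g-1)$ times the number of ways to write $m$, which when summed telescopes. Concretely, $\sum_{k}d(k)^2 = \sum_{k}|\{(a,b,a',b') : a-b=a'-b'=k\}|$; reorganising by the common value $a+b$ and using that for each value there are at most $2g-1$ ordered pairs (from $g$ unordered representations, at most $2g-1$ ordered ones once one discounts double-counting the "square" case appropriately), one gets $\sum_k d(k)^2 \le (2g-1)\sum_{a,b\in\Acal} 1 = (2g-1)|\Acal|^2$. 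This is the standard Erd\H{o}s--Tur\'an / Cilleruelo-type manipulation, and it is exactly where the hypothesis enters.

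The remaining step is to reconcile the two: combine $S(\Acal) = \sum_{1\le|k|\le N} d(k)^2$ (or the harmless inequality version) with $\sum_{1\le|k|\le N} d(k)^2 \le (2g-1)|\Acal|^2$ to conclude $S(\Acal) \le (2g-1)|\Acal|^2$. I expect the main obstacle to be the endpoint/parity bookkeeping in two places: first, correctly tracking the factor coming from $d(-k) = d(k)$ and the term $k = N$ in the discrete orthogonality (making sure no stray factor of $2$ survives), and second, getting the constant $2g-1$ rather than $2g$ in the quadruple count — this requires observing that when $a+b$ is counted, the representation with $a=b$ is "its own reverse" so the number of ordered representations of a given $m$ is at most $2g-1$, not $2g$, and this is precisely what saves the bound. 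Everything else is orthogonality and reindexing.
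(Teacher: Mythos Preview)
Your approach is essentially the one the paper has in mind: the paper's proof is just the one-line observation that $S(\Acal)$ counts quadruples $(a,b,c,d)\in\Acal^4$ with $a-b=c-d$ and $a\neq b$, together with a reference to Yu for the bound $(2g-1)|\Acal|^2$. Your discrete-orthogonality computation makes this explicit, and your awareness of the endpoint $k=N$ is correct (it contributes an extra $2d(N)^2\le 2$, harmlessly absorbed).

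One point needs fixing. Your justification for the constant $2g-1$ is not right as stated. You write that ``the number of ordered representations of a given $m$ is at most $2g-1$, not $2g$'' because the square representation is its own reverse. That is false in general: if $m/2\notin\Acal$ then $r(m)$ can be exactly $2g$. The correct reason the constant drops from $2g$ to $2g-1$ is the restriction $k\neq 0$. Concretely, with $a+b'=a'+b=m$, fixing the pair $(a,b')$ there are $r(m)$ choices of $(b,a')$, but the choice $(b,a')=(a,b')$ is forbidden since it gives $b=a$, i.e.\ $k=0$. Hence at most $r(m)-1\le 2g-1$ valid pairs, and summing over $(a,b')$ gives
\[
\sum_{k\neq 0} d(k)^2 \;\le\; \sum_{a,b'\in\Acal}\bigl(r(a+b')-1\bigr)\;\le\;(2g-1)\,|\Acal|^2.
\]
Your earlier phrasing ``at most $(2g-1)$ times the number of ways to write $m$'' is compatible with this (it is $r(m)(r(m)-1)\le(2g-1)r(m)$), but the later ``square case'' explanation should be replaced by the exclusion of $k=0$.
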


\begin{proof} 
It is for instance an intermediary result in the proof of Lemma 3 in \cite{Yu1}. The inequality follows from the fact that $S(\Acal)$
counts the number of solutions to the equation $a-b=c-d$ with $a,b,c,d \in \Acal$ and $a \neq b$.
\end{proof}

\section{Proof of the Theorem}

We now come to the central estimate of this article which is an explicit version of Lemma 2.2 of \cite{Yu2}. With such a result, we can apply the method 
not only to the case of polynomials but also to the case of power series.

\begin{thm}
\label{Maj} 
Let $\Acal$ be a $B_2 [g]$ set contained in $\{0,1,\dots,N\}$ and $b$ be an admissible function.
We have
\begin{eqnarray*}
D_\Acal (b)	&	\leq 	& 	\left(I_1(w_b)+\frac{A(w_b)}{4N^2}\right) \vert\Acal\vert^2 + \left(w_b(0)-I_1(w_b)\right)\vert\Acal\vert \\
	&		&	+\left(\sqrt{2\left(I_2(w_b)-I_1(w_b)^2\right)}+\frac{A(w_b)}{2 N^{3/2}}\right) \sqrt{ (2g-1)N\vert\Acal\vert^2-\frac{\vert\Acal\vert^4}{2}+\vert\Acal\vert^3}\,.
\end{eqnarray*}
\end{thm}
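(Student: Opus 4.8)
The strategy is to compute $D_\Acal(b)$ by feeding the Fourier expansion of $\tilde w_b$ (from Lemma \ref{F}) into the expression $D_\Acal(b)=\sum_{\theta\in\Scal_b} b_\theta |\hat f(\theta/N)|^2$ from \eqref{D}, and then to control the resulting sum term by term. Concretely, I would first apply Lemma \ref{F} to $w=w_b$, writing $w_b(t)=\tfrac{a_0}{2}+\sum_{m\ge 1} a_m\cos(\pi m t)$ with $|a_m|\le 2A(w_b)/(\pi^2 m^2)$, and using $a_0/2=I_1(w_b)$ and $\sum_{m\ge1}a_m^2 = 2(I_2(w_b)-I_1(w_b)^2)$. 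Next I would substitute $t=n/N$ and recognize, via \eqref{ffbar}, that $\sum_{|n|\le N} d(n)\cos(\pi m n/N) = |\hat f(m/(2N))|^2$, so that
\[
D_\Acal(b) = I_1(w_b)\,|\Acal|^2 + \sum_{m\ge 1} a_m \left|\hat f\!\left(\tfrac{m}{2N}\right)\right|^2 + (\text{a correction from }w_b \text{ vs. } \tilde w_b).
\]
The correction term arises because $\tilde w_b$ is the $2$-periodic function agreeing with $w_b$ only on $[-1,1]$, whereas the argument $n/N$ ranges over $[-1,1]$ exactly — so in fact $w_b(n/N)=\tilde w_b(n/N)$ for $|n|\le N$ and that substitution is exact; the subtlety is rather that $\tilde w_b$'s Fourier series evaluated at $n/N$ is what we plug in. I would be careful here: the cleanest route is $D_\Acal(b)=\sum_{|n|\le N} d(n) w_b(n/N) = \sum_{|n|\le N} d(n)\tilde w_b(n/N)$ and then expand $\tilde w_b$ by its (normally convergent) Fourier series, interchanging the finite sum over $n$ with the series over $m$.

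Having isolated $D_\Acal(b)-I_1(w_b)|\Acal|^2 = \sum_{m\ge1} a_m \bigl(|\hat f(m/(2N))|^2 - |\Acal|\bigr) + |\Acal|\sum_{m\ge1}a_m$, I would handle the two pieces separately. For the second piece, note $\sum_{m\ge 1} a_m = \tilde w_b(0) - a_0/2 = w_b(0) - I_1(w_b)$ by evaluating the Fourier series at $t=0$; this produces the $(w_b(0)-I_1(w_b))|\Acal|$ term exactly. For the first piece, I would split the sum at $m=N$. On the tail $m > N$, crude bounds suffice: $|\hat f(m/(2N))|^2 \le |\Acal|^2$ trivially (and also $\ge 0$), while $|a_m|\le 2A(w_b)/(\pi^2 m^2)$, so $\sum_{m>N}|a_m|\cdot|\Acal|^2 \le \tfrac{2A(w_b)}{\pi^2}|\Acal|^2 \sum_{m>N} m^{-2} \le \tfrac{2A(w_b)}{\pi^2 N}|\Acal|^2$; I would then absorb constants to reach the stated $\tfrac{A(w_b)}{4N^2}|\Acal|^2$ coefficient — here I should double-check the arithmetic, since getting $N^{-2}$ rather than $N^{-1}$ requires exploiting the cancellation $|\hat f|^2 - |\Acal|$ rather than the trivial bound, which brings me to the main sum. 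On the range $1\le m\le N$, I would use Cauchy–Schwarz:
\[
\Bigl|\sum_{m=1}^{N} a_m\bigl(|\hat f(\tfrac{m}{2N})|^2-|\Acal|\bigr)\Bigr| \le \Bigl(\sum_{m=1}^{\infty} a_m^2\Bigr)^{1/2}\Bigl(\sum_{m=1}^{N}\bigl(|\hat f(\tfrac{m}{2N})|^2-|\Acal|\bigr)^2\Bigr)^{1/2}.
\]
The first factor is $\sqrt{2(I_2(w_b)-I_1(w_b)^2)}$. For the second factor I would relate $\sum_{m=1}^{N}(\cdots)^2$ to the quantity $2N\cdot S(\Acal)$ from Lemma \ref{S}; since $|\hat f|^2$ at arguments $m/(2N)$ for $m$ running over a full period of length $2N$ gives $2N\,S(\Acal)$, and by evenness / the $m=0$ term one gets $\sum_{m=1}^{N}(\cdots)^2 \le 2N\,S(\Acal) \le 2N(2g-1)|\Acal|^2$. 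That would already give the leading $\sqrt{(2g-1)N}\cdot|\Acal|$ shape; to get the sharper radicand $\sqrt{(2g-1)N|\Acal|^2 - |\Acal|^4/2 + |\Acal|^3}$ I would instead use the exact evaluation of $S(\Acal)$ in terms of $\sum_n d(n)^2$ (the number of solutions of $a-b=c-d$ with $a\ne b$), namely $2N\,S(\Acal) = \sum_{|n|\le N} d(n)^2 - |\Acal|^2\cdot(\text{something})$, and the identities $\sum_n d(n) = |\Acal|^2$, $d(0)=|\Acal|$, together with the $B_2[g]$ bound, to produce the precise polynomial $\,(2g-1)N|\Acal|^2 - |\Acal|^4/2 + |\Acal|^3$ after accounting for the finitely many excluded frequencies. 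Finally, combining: the tail over $m>N$ of the Cauchy–Schwarz argument contributes $\le \sqrt{2(I_2-I_1^2)}$ times a tail of $a_m^2$ that one bounds by $A(w_b)^2$-type estimates, yielding the additive $\tfrac{A(w_b)}{2N^{3/2}}$ inside the last parenthesis.

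**Main obstacle.** The routine parts — the Fourier interchange, evaluating the series at $0$, the trivial bounds on the tail — are straightforward. The delicate point is bookkeeping the exact radicand $\sqrt{(2g-1)N|\Acal|^2 - |\Acal|^4/2 + |\Acal|^3}$: this is sharper than what a black-box application of Lemma \ref{S} gives, so I expect the proof must open up $S(\Acal)$, use $\sum_{|n|\le N}d(n)=|\Acal|^2$ and $d(0)=|\Acal|$ to subtract off the $n=0$ and constant contributions cleanly, and carefully track which frequencies $m$ in $\{1,\dots,N\}$ versus a full period $\{0,1,\dots,2N-1\}$ are being summed (the factor $2$ from evenness, the role of $m=0$ and $m=N$). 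Getting every constant to land exactly as stated — in particular the $1/4$ and $1/2$ in the error terms and the $-|\Acal|^4/2+|\Acal|^3$ — is where care is needed; conceptually there is no difficulty, but it is the step most prone to off-by-a-factor errors, so I would allocate most of the verification effort there.
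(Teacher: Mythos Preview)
Your overall skeleton is correct and matches the paper's: expand $\tilde w_b$ in its Fourier series, interchange with the finite sum over $n$, recognize $\sum_{|n|\le N} d(n)\cos(\pi m n/N) = |\hat f(m/(2N))|^2$, center by subtracting $|\Acal|$ so that $\sum_{m\ge 1} a_m$ produces the $(w_b(0)-I_1(w_b))|\Acal|$ term, and then apply Cauchy--Schwarz together with Lemma~\ref{S}. The radicand $(2g-1)N|\Acal|^2 - |\Acal|^4/2 + |\Acal|^3$ also falls out more simply than you fear: it is just $N\,S(\Acal) - \tfrac{1}{2}(|\Acal|^2-|\Acal|)^2$ (the $n=0$ contribution removed from $2N\,S(\Acal)$), followed by a black-box appeal to Lemma~\ref{S}; no further opening up of $S(\Acal)$ is required.

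The genuine gap is in your treatment of the tail. You propose to split $\sum_{m\ge 1}$ at $m=N$ and bound $\sum_{m>N} |a_m|\cdot|\Acal|^2 \lesssim |\Acal|^2/N$; you then (rightly) worry that this gives $N^{-1}$ rather than the $N^{-2}$ in the statement, and hope that the cancellation in $|\hat f|^2 - |\Acal|$ will recover the missing power of $N$. It will not: the quantity $|\hat f(m/(2N))|^2 - |\Acal|$ genuinely attains $|\Acal|^2 - |\Acal|$ whenever $m$ is a multiple of $2N$, so no pointwise cancellation is available. What the paper does instead is exploit the $2N$-periodicity of $m\mapsto |\hat f(m/(2N))|^2$: one passes to the doubly-infinite sum $\tfrac{1}{2}\sum_{n\in\Z} a_n\bigl(|\hat f(n/(2N))|^2 - |\Acal|\bigr)$ and \emph{folds} it into the finite range $n\in\{-N,\dots,N-1\}$, replacing each $a_n$ by $a_n + \sum_{k\ge 1}(a_{n+2kN}+a_{n-2kN})$. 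Since $|n\pm 2kN|\ge (2k-1)N$ for $|n|\le N$, Lemma~\ref{F} gives
\[
\sum_{k\ge 1}|a_{n\pm 2kN}| \;\le\; \frac{2A(w_b)}{\pi^2 N^2}\sum_{k\ge 1}\frac{1}{(2k-1)^{2}} \;=\; \frac{A(w_b)}{4N^2},
\]
and this is the source of the $A(w_b)/(4N^2)$ coefficient. The additive $A(w_b)/(2N^{3/2})$ in the last parenthesis then arises not from ``a tail of $a_m^2$'' as you suggest, but from these same folded remainders of size $A(w_b)/(2N^2)$ per residue, carried through Cauchy--Schwarz over the $2N-1$ nonzero residues: $\tfrac{A(w_b)}{2N^2}\sqrt{2N-1} \le \tfrac{A(w_b)}{\sqrt{2}\,N^{3/2}}$. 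Without this periodicity-folding step your argument proves only a version of the theorem with error $O(N^{-1})$ in place of $O(N^{-2})$; that would still suffice for Corollary~\ref{tau1}, but it does not establish Theorem~\ref{Maj} as stated.
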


\begin{proof}[Proof of Theorem \ref{Maj}] We apply Lemma \ref{F} to $w_b$ (which is $C^2$ and even) and use the notation introduced there for ${\tilde w}_b$. By formula 
\eqref{ffbar}, we find
\begin{eqnarray*}
D_\Acal(b)	&	=	&  \sum_{\vert n\vert \leq N} d(n) w_b \left(\frac{n}{N}\right) \\
	&	=	&  \sum_{\vert n\vert \leq N} d(n) {\tilde w}_b \left(\frac{n}{N}\right) \\
	&	=	&  \sum_{\vert n\vert \leq N} d(n) \left( \frac{a_0}{2}+\sum_{m=1}^{+\infty} a_m\cos \left( \frac{\pi mn}{N} \right) \right) \\
	&	=	&  \frac{a_0}{2} |\Acal |^2 + \sum_{m=1}^{+\infty} a_m \sum_{\vert n\vert \leq N} d(n) \cos \left( \frac{\pi mn}{N} \right)  \\
	&	=	&  \frac{a_0}{2}\left\vert\hat f(0)\right\vert^2+ \sum_{m=1}^{+\infty}a_m \left\vert\hat f\left(\frac{m}{2N}\right)\right\vert^2 \\
	&	=	&  \frac{1}{2}\sum_{n=-\infty}^{+\infty}a_n \left\vert\hat f\left(\frac{n}{2N}\right)\right\vert^2\\
	&	=	&  w_b(0)\vert \Acal\vert + \frac{1}{2}\sum_{n=-\infty}^{+\infty}a_n \left(\left\vert\hat f\left(\frac{n}{2N}\right)\right\vert^2 - \vert \Acal\vert \right)\\
	&	= 	& w_b(0)\vert \Acal\vert + \frac{1}{2}\sum_{n=-N}^{N-1} \left(a_n+\sum_{k=1}^{\infty} a_{n+2kN} +
\sum_{k=1}^{\infty} a_{n-2kN}\right) \left(\left\vert\hat f\left(\frac{n}{2N}\right)\right\vert^2 - \vert \Acal\vert \right).\\
\end{eqnarray*}
Such a rearrangement of the terms of the series is allowed by the fact it is normally convergent. This follows from 
the bounds on the $a_m$ given by Lemma \ref{F} and the boundedness of the 
terms $| \hat{f}(n/2N)|$ (which are upper bounded by $| \Acal |$).

Restarting from this identity on $D_\Acal(b)$, we obtain (on recalling $\hat f\left(0 \right)=| \Acal|\geq 1$)
\begin{eqnarray*}
D_\Acal(b)	&	=	& w_b(0)\vert \Acal\vert + \frac{1}{2}\sum_{n=-N}^{N-1} \left(a_n+\sum_{k=1}^{\infty} a_{n+2kN} +
						\sum_{k=1}^{\infty} a_{n-2kN}\right) \left(\left\vert\hat f\left(\frac{n}{2N}\right)\right\vert^2 - \vert \Acal\vert \right)\\
			&	=	& w_b(0)\vert \Acal\vert + \frac{1}{2} \left(a_0+\sum_{k=1}^{\infty} a_{2kN} +
						\sum_{k=1}^{\infty} a_{-2kN}\right) \left(\vert \Acal \vert^2 - \vert \Acal\vert \right) \\
			&&			+ \frac{1}{2}\sum_{n=-N,\dots,N-1\atop n\neq0} \left(a_n+\sum_{k=1}^{\infty} a_{n+2kN} +
						\sum_{k=1}^{\infty} a_{n-2kN}\right) \left(\left\vert\hat f\left(\frac{n}{2N}\right)\right\vert^2 - \vert \Acal\vert \right)		%\\
\end{eqnarray*}			
and then			
\begin{eqnarray*}			
D_\Acal(b)	&	\leq	& w_b(0)\vert \Acal\vert + \frac{1}{2} \left(a_0+ \left| \sum_{k=1}^{\infty} a_{2kN} \right| +
						\left| \sum_{k=1}^{\infty} a_{-2kN} \right| \right) \left(\vert \Acal \vert^2 - \vert \Acal\vert \right) \\
			&&			+ \frac{1}{2}\sum_{n=-N,\dots,N-1\atop n\neq0} \left(|a_n|+\left| \sum_{k=1}^{\infty} a_{n+2kN} \right| +
						\left| \sum_{k=1}^{\infty} a_{n-2kN}\right| \right) \left| \left\vert\hat f\left(\frac{n}{2N}\right)\right\vert^2 - \vert \Acal\vert \right|	.
\end{eqnarray*}

By the upper bound given in Lemma \ref{F}, we have
$$
\left\vert\sum_{k=1}^{\infty}a_{n+2kN} \right\vert,\    \left\vert\sum_{k=1}^{\infty}a_{n-2kN} \right\vert   \le \sum_{k=1}^{\infty} \frac{2A(w_b)}{\pi^2((2k-1)N)^2}= \frac{A(w_b)}{4N^2}
$$
%and
%$$  
%\left\vert\sum_{k=1}^{\infty}a_{n-2kN} \right\vert\le \sum_{k=1}^{\infty} \frac{2A(w_b)}{\pi^2((2k-1)N)^2}= \frac{A(w_b)}{4N^2}
%$$
for $n=-N,\dots,N-1$.  We thus deduce
\begin{eqnarray*}
D_\Acal(b) 	&	\le	& 	w_b(0) \vert \Acal\vert +\left(\frac{a_0 }{2}+\frac{A(w_b)}{4N^2}\right)\left( \vert\Acal\vert^2-\vert\Acal\vert\right) \\
	&		&	\hspace{2cm}	+\frac{1}{2}\sum_{n=-N,\dots,N-1\atop n\neq0}\left(\vert a_n\vert+\frac{A(w_b)}{2N^2}\right) 
								\left\vert\left\vert\hat f\left(\frac{n}{2N}\right)\right\vert^2-\vert\Acal\vert\right\vert \,.
\end{eqnarray*}

The last term of this upper bound is bounded above using Cauchy-Schwarz inequality, more precisely
\begin{eqnarray*}
	\sum_{n=-N,\dots,N-1\atop n\neq0} \left(\vert a_n\vert+\frac{A(w_b)}{2N^2}\right) \left\vert\left\vert\hat f\left(\frac{n}{2N}\right)\right\vert^2-\vert\Acal\vert\right\vert \\
&\hspace{-14cm}=& \hspace{-7cm}\sum_{n=-N,\dots,N-1\atop n\neq0} \vert a_n\vert \left\vert\left\vert\hat f\left(\frac{n}{2N}\right)\right\vert^2-\vert\Acal\vert\right\vert
+ \frac{A(w_b)}{2N^2} \sum_{n=-N,\dots,N-1\atop n\neq0}	\left\vert\left\vert\hat f\left(\frac{n}{2N}\right)\right\vert^2-\vert\Acal\vert\right\vert \\
&\hspace{-14cm}	\le 	&\hspace{-7cm}	\left( \left( \sum_{n=-N,\dots,N-1\atop n\neq0} \vert a_n\vert^2\right)^{1/2} + \frac{A(w_b)}{2N^2} (2N-1)^{1/2} \right)
\left( \sum_{n=-N,\dots,N-1\atop n\neq0} \left\vert\left\vert\hat f\left(\frac{n}{2N}\right)\right\vert^2-\vert\Acal\vert\right\vert^2\right)^{1/2}\\
&\hspace{-14cm}	\le 	&\hspace{-7cm}	\left( \left( \sum_{n=-N,\dots,N-1\atop n\neq0} \vert a_n\vert^2\right)^{1/2} + \frac{A(w_b)}{\sqrt{2} N^{3/2} } \right)
\left( \sum_{n=-N,\dots,N-1\atop n\neq0} \left\vert\left\vert\hat f\left(\frac{n}{2N}\right)\right\vert^2-\vert\Acal\vert\right\vert^2\right)^{1/2}\,.
%\left( \sum_{n=-N,\dots,N-1\atop n\neq0} \left\vert\left\vert\hat f\left(\frac{n}{2N}\right)\right\vert^2-\vert\Acal\vert\right\vert^2\right)^{1/2}
\end{eqnarray*}

Plugging this bound, we finally obtain 
\begin{eqnarray*}
D_\Acal(b)	&	\le 	&	w_b(0)\vert \Acal\vert+\left(\frac{a_0 }{2}+\frac{A(w_b)}{4N^2}\right)\left( \vert\Acal\vert^2-\vert\Acal\vert\right)\\
	&		&	+\frac{1}{2}\left(\left( \sum_{n=-N,\dots,N-1\atop n\neq0} \vert a_n\vert^2\right)^{1/2}+\frac{A(w_b)}{\sqrt2 N^{3/2}}\right)
\left( \sum_{n=-N,\dots,N-1\atop n\neq0} \left\vert\left\vert\hat f\left(\frac{n}{2N}\right)\right\vert^2-\vert\Acal\vert\right\vert^2\right)^{1/2}\\
	&	\le	&	 w_b(0)\vert \Acal\vert+\left(\frac{a_0 }{2}+\frac{A(w_b)}{4N^2}\right)\left( \vert\Acal\vert^2-\vert\Acal\vert\right)\\
	&		&	+\left(\left( \frac{1}{2}\sum_{n=-N,\dots,N-1\atop n\neq0} \vert a_n\vert^2\right)^{1/2}+\frac{A(w_b)}{2 N^{3/2}}\right)
\left( N\Scal(\Acal) - \frac{(\vert\Acal\vert^2-\vert\Acal\vert)^2}{2}\right)^{1/2}\,.
\end{eqnarray*}

It is now enough to use the final identities of Lemma \ref{F}. Since $I_1(w_b)=a_0/2$ and
$$
\frac{1}{2}\sum_{n=-N,\dots,N-1\atop n\neq0}  \vert a_n\vert^2\le \sum_{n=1}^{\infty} \vert a_n\vert^2=2(I_2(w_b)-I_1(w_b)^2),
$$
we conclude using Lemma \ref{S}.
\end{proof}

\begin{cor}
\label{tau1}
Let $b$ be an arbitrary admissible function such that $I_1(w_b)<0$, then we have
$$
\limsup_{N\to\infty} \frac{F(g,N)^2}{(2g-1)N} \le 2 \left(1-\frac{I_1(w_b)^2}{I_2(w_b)}\right) .
$$
\end{cor}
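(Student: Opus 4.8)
The plan is to derive Corollary \ref{tau1} directly from Theorem \ref{Maj}, treating the inequality there as a quadratic constraint on $|\Acal|$ and extracting the asymptotics. First I would abbreviate $\alpha = |\Acal|$, $P = I_1(w_b)$, $Q = \sqrt{2(I_2(w_b)-I_1(w_b)^2)}$, and recall from \eqref{D} that $D_\Acal(b) \ge b_0 \alpha^2 \ge 0$, so the left-hand side of the Theorem \ref{Maj} bound may simply be dropped (replaced by $0$). This leaves the inequality
$$
0 \le \Bigl(P+\tfrac{A(w_b)}{4N^2}\Bigr)\alpha^2 + \bigl(w_b(0)-P\bigr)\alpha + \Bigl(Q+\tfrac{A(w_b)}{2N^{3/2}}\Bigr)\sqrt{(2g-1)N\alpha^2 - \tfrac{\alpha^4}{2} + \alpha^3}\,,
$$
valid for every $B_2[g]$ set $\Acal\subset\{0,\dots,N\}$. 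Since $I_1(w_b) = P < 0$, for $N$ large the coefficient $P + A(w_b)/(4N^2)$ of $\alpha^2$ is negative, so this forces an upper bound on $\alpha$: rearranging, the negative quadratic term must be compensated by the square-root term (the linear term being of lower order).

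Next I would do the asymptotic bookkeeping. Write $\alpha = c\sqrt{N}(1+o(1))$ for the extremal set and substitute. The dominant terms are of order $N$: the quadratic term contributes $P c^2 N$, and inside the square root, $(2g-1)N\alpha^2 - \alpha^4/2 = \bigl((2g-1)c^2 - c^4/2\bigr)N^2$, so the square-root term contributes $Q\sqrt{(2g-1)c^2 - c^4/2}\,N$ (the $\alpha^3$ term and the $A(w_b)$-corrections are $O(N^{3/2})$ and hence negligible, as is the linear $\alpha$ term which is $O(\sqrt{N})$). Dividing by $N$ and letting $N\to\infty$ yields
$$
0 \le P c^2 + Q\sqrt{(2g-1)c^2 - \tfrac{c^2}{2}\cdot c^2}\,,
$$
i.e. $-P c^2 \le Q\, c\sqrt{(2g-1) - c^2/2}$ (here $P<0$, so $-Pc^2 \ge 0$ and both sides are nonnegative). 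Dividing by $c$ and squaring gives $P^2 c^2 \le Q^2\bigl((2g-1) - c^2/2\bigr)$, hence $c^2\bigl(P^2 + Q^2/2\bigr) \le (2g-1)Q^2$, that is
$$
c^2 \le \frac{(2g-1)Q^2}{P^2 + Q^2/2}.
$$
Substituting $Q^2 = 2(I_2(w_b)-I_1(w_b)^2)$ and $P^2 = I_1(w_b)^2$, one gets $P^2 + Q^2/2 = I_2(w_b)$, so $c^2 \le (2g-1)\cdot 2(I_2(w_b)-I_1(w_b)^2)/I_2(w_b) = 2(2g-1)(1 - I_1(w_b)^2/I_2(w_b))$. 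Since $F(g,N)^2/((2g-1)N) \to c^2/(2g-1)$ along the extremal sequence, this is exactly the claimed bound on the $\limsup$.

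The main obstacle — and the step that needs care rather than cleverness — is making the limiting argument rigorous rather than heuristic: one cannot simply assume $F(g,N) \sim c\sqrt{N}$. The clean way is to set $L = \limsup_{N\to\infty} F(g,N)/\sqrt{N}$ (finite by \eqref{upperbound1}), pick a subsequence of $N$ along which $F(g,N)/\sqrt N \to L$ with extremal sets $\Acal_N$ of size $F(g,N)$, plug $\alpha = F(g,N)$ into the displayed inequality, divide through by $N$, and pass to the limit along the subsequence; the error terms involving $A(w_b)/N^2$, $A(w_b)/N^{3/2}$, and the linear-in-$\alpha$ term all vanish in the limit since $A(w_b)$ is a fixed finite constant (finiteness of $A(w_b) = |w_b'(1)| + \|w_b''\|$ following from admissibility of $b$, which makes $w_b \in C^\infty(\R)$). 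One also checks that $L > 0$ (indeed $L \ge 1$ since Sidon sets of size $\sim\sqrt N$ exist and are $B_2[g]$), so dividing by $c = L$ is legitimate, and that $(2g-1) - L^2/2 \ge 0$ a posteriori (otherwise the square root is not even real, but the inequality as derived already forces $L^2 \le 2(2g-1)(1-I_1^2/I_2) < 2(2g-1)$, so this is automatic). Finally, one should note $I_2(w_b) > 0$ so the ratio is well-defined — true whenever $w_b$ is not identically zero, which holds as $b_0 > 0$ is implicit in $I_1(w_b) < 0$ being meaningful, or can simply be assumed.
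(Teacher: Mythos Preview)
Your proof is correct and follows essentially the same route as the paper's: drop the left-hand side of Theorem~\ref{Maj} using $D_\Acal(b)\ge0$, rearrange so that the negative $I_1(w_b)\,|\Acal|^2$ term is bounded by the square-root term, square, and simplify using $P^2+Q^2/2=I_2(w_b)$. Your treatment is in fact more scrupulous than the paper's---you pass to a subsequence realizing the $\limsup$ and verify the side conditions ($L>0$, $I_2(w_b)>0$, nonnegativity before squaring) explicitly, whereas the paper compresses all of this into $o(1)$ notation.
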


\begin{proof} 
Let $\Acal$ be a $B_2[g]$ set in $\{1,\dots,N\}$ with $|\Acal | = F(g,N)$. In particular, $|\Acal | \gg \sqrt{N}$.

For an arbitrary admissible function $b$, we apply Theorem  \ref{Maj} and let $N$ tend to infinity. Since, by non-negativity of $b$ and inequality \eqref{D}, $D_\Acal (b) \geq 0$, we obtain
$$
\left(-I_1(w_b)+o(1)\right)\vert\Acal\vert^2\le  \left(\sqrt{2\left(I_2(w_b)-I_1(w_b)^2\right)}+o(1)\right)
\sqrt{ (2g-1)N\vert\Acal\vert^2-\frac{\vert\Acal\vert^4}{2}+\vert\Acal\vert^3}\,.
$$
Thanks to the assumption that $I_1(w_b)<0$, one can square the preceding inequality and we obtain 
$$
\left(I_1(w_b)^2+o(1)\right)\vert\Acal\vert^4\le 2\left(I_2(w_b)-I_1(w_b)^2\right)
\left((2g-1)N\vert\Acal\vert^2-\frac{\vert\Acal\vert^4}{2}\right)
$$
and, after simplification,
$$
\left(I_2(w_b)+o(1)\right)\vert\Acal\vert^2 \le 2 \left(I_2(w_b)-I_1(w_b)^2\right)(2g-1)N.
$$
The corollary follows.
\end{proof}

\section{Optimization : Choosing $b$}

In view of Corollary \ref{tau1}, we are led to the optimization problem of computing 
$$
\max_{b\ {\rm admissible\ such\ that\ }  I_1(w_b)<0}  \hspace{1cm}  \frac{I_1(w_b)^2}{I_2(w_b)}.
$$

In his paper \cite{Yu2}, Yu first (his Theorem 1) chooses the function
$$
w(t)=\sum_{m=0}^M \frac{ \cos \left( 2 \pi (m+\lambda) t \right) }  {m+\lambda}
$$
where $M$ is taken equal to $10^6$ and $\lambda= 3/4$ (a case for which computations are made easier)  which gives the bound $1.74246$.
Yu then proceeds with a numerical optimization and finally, with $\lambda = 0.75315$, he gets the value $1.74217$ leading to Yu's second theorem (this is the value 
mentioned in \eqref{upperbound1}). 

There are several ways to improve on this result. First, our general result can be applied to any truncation of the infinite series (which is non convergent for $t=0$) 
associated with Yu's function. 
If we go back to the case $\lambda = 3/4$, and let $M$ tend to infinity, this already gives the bound $1.7424537\dots$, %$45422$, 
which is the limit of Yu's function with this choice of parameter. But, again, one may then move slightly $\lambda$. We used a signed continued fraction method 
which leads us to consider the value $\lambda = 365/478$ (at some step). With this choice of $\lambda$, we are led 
to the numerical upper bound $1.7407259\dots$ %237377\dots$ 
We do not enter into more details here since this method does not give the best value we could obtain.

In fact, there is no reason to choose such a regular function $w$. We started a numerical study on functions of the form 
$$
w(t) = \cos \big( (y_0+ \pi) t \big) + \sum_{j=1}^M \frac{c_j}{j}  \cos \big( (y_j+(2j+1)\pi ) t \big).
$$

We used a Maple program and could go up to $M=400$  (that is, $801$ variables). 
The computation took about four days on a shared machine equipped
with two Intel Xeon E5-2470v2 processors. 
Notice that, more than time-consuming, this approach is very
space-consuming and in fact limited by space considerations.
%2 processeurs Intel Xeon E5-2470v2 (25M Cache, 2.40GHz,8 GT/s QPI)
%avec 10 coeurs hyperthreaded => 40 CPU estimation : 192026.60 BogoMIPS    96Go de RAM
In the above form, we were looking for an optimum where the $y_j$ are restricted to belong to $(0,\pi)$ and the $c_j$ to $(0,1)$. 
It turns out that when we increase the number $M$ of variables, the values $y_j$ and $c_j$ seem to converge. Here are the first values 
that are given by the optimization process (obtained for $M=400$):
\begin{eqnarray*}
c_{01 } = 0.448668493767477, &c_{02 } = 0.575146465019734,& c_{03 } = 0.634139353767643,\\
c_{04 } = 0.668206769165044, &c_{05 } = 0.690373909392123,& c_{06 } = 0.705944152178521,\\
    c_{07 } = 0.717479053644182,& c_{08 } = 0.726366349898625,& c_{09 } = 0.733423759607086,\\ 
    c_{10 } = 0.739163465377496,& c_{11 } = 0.743922783065952,& c_{12 } = 0.747933037687434,\\
    c_{13 } = 0.751358473065359,& c_{14 } = 0.754318115226197,& c_{15 } = 0.756900829824045,\\
    c_{16 } = 0.759174482613027,& c_{17 } = 0.761190238930946,& c_{18 } = 0.762988657959701,\\
c_{19 } = 0.764605831570057,& c_{20 } = 0.766063873483719,& c_{21 } = 0.767398988945215, \\
    c_{22 } = 0.768616037123302,& c_{23 } = 0.769721510942451,& c_{24 } = 0.770739989883381,\\
    c_{25 } = 0.771678878036841,& c_{26 } = 0.772543457251216,& c_{27 } = 0.773353319988327, \\
    c_{28 } = 0.774096401927810,& c_{29 } = 0.774802358105814,& c_{30 } = 0.775461565599078,\\
    c_{31 } = 0.776070438424819,& c_{32 } = 0.776640535845029,& c_{33 } = 0.777213408942223, \\
    c_{34 } = 0.777688024987857,& c_{35 } = 0.778162522583045,& c_{36 } = 0.778618081806088,\\
    c_{37 } = 0.779075729278605,& c_{38 } = 0.779444959637105,& c_{39 } = 0.779857433648994, \\
    c_{40 } = 0.780247031029276,& c_{41 } = 0.780579370448116,& c_{42 } = 0.780921813816887,\\
    c_{43 } = 0.781221129831046,& c_{44 } = 0.781554783493105,& c_{45 } = 0.781870431056320, \\
    c_{46 } = 0.782110198962599,& c_{47 } = 0.782361619824327,& c_{48 } = 0.782643557927602,\\
    c_{49 } = 0.782885035586508,& c_{50 } = 0.783100192717692,&
\end{eqnarray*}
and 
\begin{eqnarray*}
y_{00 } = 1.69023069423400,& y_{01 } = 1.62455004938005,& y_{02 } = 1.60400691427448,\\
    y_{03 } = 1.59374507362384,& y_{04 } = 1.58739065526372,& y_{05 } = 1.58292851285127,\\
    y_{06 } = 1.57952428074446,& y_{07 } = 1.57677070519547,& y_{08 } = 1.57444556939989,\\
    y_{09 } = 1.57241834643895,& y_{10 } = 1.57060466311460,& y_{11 } = 1.56895032673690,\\
    y_{12 } = 1.56741998541706,& y_{13 } = 1.56598348343700,& y_{14 } = 1.56462349022195,\\
    y_{15 } = 1.56332531960606,& y_{16 } = 1.56207725583259,& y_{17 } = 1.56086642851363,\\
    y_{18 } = 1.55969722035216,& y_{19 } = 1.55855788286864,& y_{20 } = 1.55745188656436,\\
    y_{21 } = 1.55638570641239,& y_{22 } = 1.55528462446397,& y_{23 } = 1.55421905033814,\\
    y_{24 } = 1.55318764446397,& y_{25 } = 1.55213468181519,& y_{26 } = 1.55113576643217,
    \end{eqnarray*}
\begin{eqnarray*}
    y_{27 } = 1.55011416521470,& y_{28 } = 1.54911054412942,& y_{29 } = 1.54815575459570,\\
    y_{30 } = 1.54715785448177,& y_{31 } = 1.54615472793709,& y_{32 } = 1.54518383791521,\\
        y_{33 } = 1.54424768835177,& y_{34 } = 1.54324227742403,& y_{35 } = 1.54234694571695,\\
           y_{36 } = 1.54139048590958,& y_{37 } = 1.54036349157331,& y_{38 } = 1.53942606099970,\\
        y_{39 } = 1.53850611740410,& y_{40 } = 1.53758211330524,& y_{41 } = 1.53663231603874,\\
    y_{42 } = 1.53567473396147,& y_{43 } = 1.53474740944525,& y_{44 } = 1.53383628504159,\\
    y_{45 } = 1.53290791051452,& y_{46 } = 1.53193597506582,& y_{47 } = 1.53097247735348,\\
    y_{48 } = 1.53007947174410,& y_{49 } = 1.52921326776155,& y_{50 } = 1.52829122078524.
\end{eqnarray*}
The interested reader can refer to the complete numerical results available in \cite{HPnum}.
Notice that this function remains close to Yu's function, which after renormalization can be taken equal to 
$$
w(t)=\sum_{m=0}^M \frac{\lambda} {m+\lambda}  \cos \left( ((2m+1)\pi+ (2 \lambda-1)\pi ) t \right).
$$
Indeed the coefficients $c_j$ remains around $0.75$ while the coefficients $y_j$ are slightly above $1.5$.

Finally considering these values (and those for bigger indices) for $M=400$ led us to the value 
$1.74046270371931700$ and thus to Theorem \ref{theo}.

Heuristically, it seems that the method could be pushed up to proving the bound $1.74$. However, if true 
and provable by the present method, this could require to use a value of $M$ much larger than $400$.

\bigskip\bigskip

\end{document}